\theoremstyle{plain}
\newtheorem{teo}{Theorem}[section]
\newtheorem{lem}[teo]{Lemma}
\newtheorem{defn}[teo]{Definition}
\theoremstyle{remark}
\newtheorem{obs}[teo]{Remark}
\newtheorem{exa}[teo]{Example}
\numberwithin{equation}{section}
\newcommand{\bbR}{\mathbb{R}}
\newcommand{\eps}{\varepsilon}
\newcommand{\Cov}{\textup{Cov}}
\newcommand{\al}{\alpha}
\newcommand{\la}{\lambda}
\newcommand{\de}{\delta}
\newcommand{\te}{\theta}
\newcommand{\tu}{\theta_1}
\newcommand{\td}{\theta_2}
\newcommand{\ga}{\gamma}
\newcommand{\ka}{\kappa}
\newcommand{\xf}{x_{\infty}}
\newcommand{\uf}{u_{\infty}}
\newcommand{\tf}{\tau_{\infty}}
\newcommand{\Si}{\Sigma}
\newcommand{\vn}{\sigma^2}
\newcommand{\La}{\Lambda}
\newcommand{\apq}{(\al, p, q)}
\newcommand{\GV}{\mathcal{V}}
\newcommand{\LW}[1]{\ensuremath{W_0(#1)}}
\newcommand{\LWR}[1]{\ensuremath{W_{-1}(#1)}}
\begin{document}

\title[Stochastic rumour model]{Limit theorems for a general \\
stochastic rumour model}

\author[Elcio Lebensztayn et al.]{Elcio Lebensztayn}
\address[E. Lebensztayn and F. P. Machado]{Statistics Department, Institute of Mathematics and Statistics, University of S\~ao Paulo, CEP 05508-090, S\~ao Paulo, SP, Brazil.}
\email{elcio@ime.usp.br}
\thanks{Elcio Lebensztayn was supported by CNPq (311909/2009-4), F\'abio P. Machado by CNPq (306927/2007-1) and FAPESP (2009/18253-7), and Pablo M. Rodr\'iguez by FAPESP (2010/06967-2).}
\author[]{F\'abio P. Machado}
\email{fmachado@ime.usp.br}
\author[]{Pablo M. Rodr\'iguez}
\address[P. M. Rodr\'{i}guez]{Statistics Department, Institute of Mathematics, Statistics and Scientific Computation, University of Campinas, CEP 13083-859, Campinas, SP, Brazil.}
\email{pablor@ime.unicamp.br}

\keywords{Stochastic rumour, Daley--Kendall, Maki--Thompson, Limit theorems, Markov process.}
\subjclass[2000]{Primary: 60F05, 60J27; secondary: 60K30.}
\date{\today}

\begin{abstract}
We study a general stochastic rumour model in which an ignorant individual has a certain probability of becoming a stifler immediately upon hearing the rumour. 
We refer to this special kind of stifler as an uninterested individual. 
Our model also includes distinct rates for meetings between two spreaders in which both become stiflers or only one does, so that particular cases are the classical Daley--Kendall and Maki--Thompson models. 
We prove a Law of Large Numbers and a Central Limit Theorem for the proportions of those who ultimately remain ignorant and those who have heard the rumour but become uninterested in it.
\end{abstract}

\maketitle 

\section{Introduction}
\label{S: Introduction}

Various mathematical models for the propagation of a rumour within a population have been developed in the past decades. Two classical models were introduced by Daley and Kendall~\cite{DK} and Maki and Thompson~\cite{MT}. 
In the first model (which we shorten to [DK] model), a closed homogeneously mixing population of $N + 1$ individuals is considered.
People are subdivided into three classes: \textit{ignorants} (those not aware of the rumour), \textit{spreaders} (who are spreading it), and \textit{stiflers} (who know the rumour but have ceased communicating it after meeting somebody who has already heard it). 
We adhere to the usual notation, denoting the number of ignorants, spreaders and stiflers at time $t$ by $X(t)$, $Y(t)$ and $Z(t)$, respectively.
Initially, $X(0) = N$, $Y(0) = 1$ and $Z(0) = 0$, and $X(t) + Y(t) + Z(t) = N + 1$ for all~$t$.
The process $\{(X(t), Y(t))\}_{t \geq 0}$ is a continuous-time Markov chain with transitions and corresponding rates given by
\begin{equation*}
{\allowdisplaybreaks
\begin{array}{cc}
\text{transition} \quad &\text{rate} \\[0.1cm]
(-1, 1) \quad &X Y, \\[0.1cm]
(0, -2) \quad &\displaystyle\binom{Y}{2}, \\[0.2cm]
(0, -1) \quad &Y (N + 1 - X - Y).
\end{array}}%
\end{equation*}
People interact by pairwise contacts, and the three possible transitions correspond to spreader-ignorant, spreader-spreader and spreader-stifler interactions.
In the first case, the spreader tells the rumour to an ignorant, who becomes a spreader.
The two other transitions represent the transformation of the spreader(s) involved in the meeting into stifler(s), that is, the loss of interest in propagating the rumour derived from learning that it is already known by the other individual in the meeting.

In the model formulated by Maki and Thompson~\cite{MT} (which we shorten to [MT] model), differently from the [DK] model, the rumour is spread by \textit{directed contact} of the spreaders with other individuals, i.e., the initiator is distinguished from the recipient.
In addition, when a spreader contacts another spreader, only the initiating one becomes a stifler.
Thus, the continuous-time Markov chain $\{(X(t), Y(t))\}_{t \geq 0}$ evolves according to the following table
\begin{equation*}
\begin{array}{cc}
\text{transition} \quad &\text{rate} \\[0.1cm]
(-1, 1) \quad &X Y, \\[0.1cm]
(0, -1) \quad &Y (N - X).
\end{array}
\end{equation*}
We refer to Daley and Gani~\cite[Chapter~5]{DG} for an excellent account on the subject of rumour models.

We introduce a general stochastic rumour model with the following characteristics:
\begin{enumerate}
\item[(i)] Distinct rates for meetings between two spreaders in which both become stiflers or only one does.
\item[(ii)] A new class of individuals, which we call \textit{uninterested}.
In traditional models, once an ignorant is told the rumour, only the transformation into a spreader is allowed.
In our model, this individual has the choice (with a certain probability) of becoming uninterested, that is, of stifling right after hearing the rumour.
\end{enumerate}

We underline that a stifler is an ex-spreader who has lost interest in propagating the rumour, whereas an uninterested is an ex-ignorant whose lack of interest arose immediately upon hearing the rumour from a spreader.

To define the model, we consider the notation introduced so far and denote by $U(t)$ the number of uninterested individuals at time $t$.
Initially, $X(0) = N$, $U(0) = 0$, $Y(0) = 1$ and $Z(0) = 0$, and $X(t) + U(t) + Y(t) + Z(t) = N + 1$ for all~$t$.
Let $V(t) = (X(t), U(t), Y(t))$.
We suppose that $\{V(t)\}_{t \geq 0}$ is a continuous-time Markov chain with initial state $(N, 0, 1)$ and
\begin{equation*}
\begin{array}{cc}
\text{transition} \quad &\text{rate} \\[0.1cm]
(-1, 0, 1) \quad &\la \, \de \, X Y, \\[0.1cm]
(-1, 1, 0) \quad &\la (1 - \de) \, X Y, \\[0.1cm]
(0, 0, -2) \quad &\la \, \tu \displaystyle\binom{Y}{2}, \\[0.2cm]
(0, 0, -1) \quad &\la \, \td \, Y (Y - 1) + \la \, \ga \, Y (N + 1 - X - Y).
\end{array}
\end{equation*}
The first two cases correspond to a spreader telling the rumour to an ignorant, who decides to become a spreader or an uninterested (that is, an immediate stifler) with respective probabilities~$\de$ and $1 - \de$.
The interaction between two spreaders splits into two cases, in which both become stiflers or only one does.
These cases are represented by the third transition and by the first part of the rate of the fourth transition.
Finally, the second part of this rate corresponds to a meeting between a spreader and a stifler or an uninterested individual.

We define $\te = \tu + \td - \ga$ and assume throughout the paper that
\begin{equation}
\label{F: Hyp}
\la > 0, \, \ga > 0, \, \tu \geq 0, \, \td \geq 0, \, 0 < \de \leq 1 \, \text{ and } \, 0 \leq \te \leq 1.
\end{equation}
Notice that both [DK] and [MT] models can be obtained by suitably choosing these constants.
The family of Markov chains just defined includes the classical and other rumour models, some of which are presented in \S\ref{S: Main results}, showing the convenience of the adopted parametrization.

To state our results, it is convenient to write down explicitly the dependence of~$V$ on~$N$, so we use the notation $V^{(N)} = (X^{(N)}, U^{(N)}, Y^{(N)})$.
Observing that the process eventually terminates (when there are no spreaders in the population), we define
\[ \tau^{(N)} = \inf \{t: Y^{(N)}(t) = 0 \}. \]
In \S\ref{S: Main results}, we present a Weak Law of Large Numbers and a Central Limit Theorem for $ N^{-1} \, (X^{(N)}(\tau^{(N)}), U^{(N)}(\tau^{(N)})) $, that is, the ultimate fractions of the originally ignorant individuals who remained ignorant and who became uninterested. 
Proofs are presented in \S\ref{S: Proofs}.

As far as we know, limit theorems were rigorously proved only for the basic [DK] and [MT] models 
(see Pittel~\cite{Pittel}, Sudbury~\cite{Sudbury} and Watson~\cite{Watson}).
In addition, the main tools applied for stochastic rumour processes have been the analysis of the embedded Markov chain, martingale arguments, diffusion approximations, generating functions and the study of analogue deterministic versions.
A good survey of these methods can be found in Daley and Gani~\cite{DG}.
Briefly, the technique we use consists in defining a coupled process with the same transitions as the original process until absorption.
This construction is suitably defined, in such a way that the new process is a density dependent stochastic model for which the theory presented in Ethier and Kurtz~\cite{MPCC} can be applied. 
This approach allows us to establish limit theorems for more general models. 
To the best of our knowledge, this idea is used for the first time in the context of stochastic rumour models here and in Lebensztayn et al.~\cite{RPRS}, in which we study a generalization of the Maki--Thompson model with random stifling and general initial configuration.

\section{Main results}
\label{S: Main results}

We start off with a few definitions.

\begin{defn}
\label{D: xf}
For $0 < \te < 1$, consider the function $f: [0, 1] \rightarrow \bbR $ given by
\begin{equation*}
f(x) = \frac{(\ga + \de \te) x^{\te} - (\ga + \de) \te x - \ga (1 - \te)}{\te (1 - \te)}.
\end{equation*}
We define $\xf = \xf(\de, \ga, \te)$ as the unique root of $f(x) = 0$ in the interval $(0, 1)$.
\end{defn}

To justify the existence and uniqueness of the root, notice that $f(0) < 0$, $f(1) = 0$ and that $f$ is unimodal, with a global maximum at the point
\[ \left(\frac{\ga + \de \te}{\ga + \de}\right)^{\frac{1}{1 - \te}} \in (0, 1). \]
For $\te = 1/2$, an explicit formula for $\xf$ can be obtained, namely,
\[ \xf(\de, \ga, 1/2) = \left(\frac{\ga}{\ga + \de}\right)^2. \]

In the cases $\te = 0$ and $\te = 1$, $\xf$ is defined similarly.

\begin{defn}
\label{D: xf01}
Consider the functions $f_0$ and $f_1$ defined on $(0, 1]$ by
\begin{align*}
f_0(x) &= \lim_{\te \to 0^{+}} f(x) = (\ga + \de) (1 - x) + \ga \log x, \\
f_1(x) &= \lim_{\te \to 1^{-}} f(x) = - \ga (1 - x) - (\ga + \de) \, x \log x.
\end{align*}
For each $\te \in \{ 0, 1 \}$, we denote by $\xf(\de, \ga, \te)$ the unique root of $f_{\te}(x) = 0$ in $(0, 1)$.
\end{defn}

\begin{obs}
In the last case, $\xf$ can be written in terms of the Lambert $W$ function, which is the multivalued inverse of the function $ x \mapsto x \, e^x $.
Let $W_0$ and $W_{-1}$ be the principal and the lower real branches of the Lambert $W$ function, respectively; see Corless et al.~\cite{LW} for more details.
Defining $h = 1 + \de / \ga$, we have that
\begin{align*}
\xf(\de, \ga, 0) &= - {h}^{-1} \, \LW{- h \, e^{-h}} \quad \text{and} \\
\xf(\de, \ga, 1) &= - \left[{h} \, \LWR{- e^{-1 / h} / h}\right]^{-1}.
\end{align*}
\end{obs}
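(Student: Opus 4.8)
The plan is to solve the defining equations $f_0(x)=0$ and $f_1(x)=0$ explicitly by reducing each to the canonical form $w\,e^w = c$ that defines the Lambert $W$ function, and then to identify the correct real branch. Existence and uniqueness of the root in $(0,1)$ are already guaranteed by the monotonicity discussion preceding the definitions, so the content of the remark is merely to write that unique root in closed form.

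For $\te = 0$, I would start from $f_0(x) = (\ga + \de)(1-x) + \ga \log x = 0$ and divide by $\ga$ to obtain $h(1-x) + \log x = 0$, where $h = 1 + \de/\ga$. Exponentiating gives $x = e^{hx - h}$, hence $x\,e^{-hx} = e^{-h}$; multiplying through by $-h$ yields $(-hx)\,e^{-hx} = -h\,e^{-h}$. By the definition of $W$ this means $-hx = W(-h\,e^{-h})$, so $x = -h^{-1}\,W(-h\,e^{-h})$. For $\te = 1$, the substitution $z = 1/x$ transforms $f_1(x) = 0$ into $h \log z = z - 1$, which is structurally identical; the same manipulation gives $(-z/h)\,e^{-z/h} = -h^{-1}\,e^{-1/h}$, so that $z = -h\,W(-h^{-1}e^{-1/h})$ and $x = 1/z = -[h\,W(-h^{-1}e^{-1/h})]^{-1}$.

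The only real work is selecting the branch, which I expect to be the main (though modest) obstacle. In both cases the argument of $W$ has the form $-k\,e^{-k}$ with $k = h > 1$ when $\te = 0$ and $k = 1/h \in (0,1)$ when $\te = 1$, so it lies in $(-e^{-1}, 0)$ and both real branches $\LW{\cdot}$ and $\LWR{\cdot}$ are available. The equation $w\,e^w = -k\,e^{-k}$ always has the trivial solution $w = -k$, corresponding to the spurious root $x = 1$ (one checks $f_0(1) = f_1(1) = 0$) that must be discarded. For $\te = 0$ this trivial solution is $w = -h < -1$, placing it on the lower branch $\LWR{\cdot}$; hence the desired root in $(0,1)$ comes from the principal branch, giving $\xf(\de, \ga, 0) = -h^{-1}\,\LW{-h\,e^{-h}}$, and since $\xf < 1/h$ we indeed have $-h\,\xf \in (-1, 0)$, consistent with $\LW{\cdot}$. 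For $\te = 1$ the trivial solution is $w = -1/h \in (-1, 0)$, which lies on the principal branch $\LW{\cdot}$; the nontrivial root therefore comes from the lower branch, giving $z = -h\,\LWR{-h^{-1}e^{-1/h}} > h$ and $\xf(\de, \ga, 1) = -[h\,\LWR{-e^{-1/h}/h}]^{-1}$ with $\xf \in (0, 1/h)$ as required. These observations pin down both formulas exactly as stated.
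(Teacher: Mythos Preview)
Your derivation is correct and complete. The paper states this remark without proof, so there is no argument to compare against; your approach---reducing each equation to the canonical form $w e^{w} = c$ and then isolating the correct branch by identifying the spurious root $x=1$---is exactly the natural one, and your branch analysis is accurate. In particular, your use of the inequality $\xf < \gamma/(\gamma+\delta) = 1/h$ (which the paper establishes just below the remark) to confirm consistency of the chosen branch is a nice sanity check.
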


We prove that, whatever the value of $\te$, the following inequality holds:
\begin{equation}
\label{F: Ineq xf}
\xf(\de, \ga, \te) < \frac{\ga}{\ga + \de}.
\end{equation}
For this, it is enough to show that $f$ and $f_{\te}$ evaluated at the point $\ga / (\ga + \de)$ are strictly greater than zero.
For $\te \in \{ 0, 1 \}$, formula~\eqref{F: Ineq xf} is therefore an immediate consequence of a standard logarithm inequality:
\begin{equation}
\label{F: Log}
\frac{u - 1}{u} < \log u < u - 1 \, \text{ for all } 0 < u < 1.
\end{equation}
For $\te \in (0, 1)$, we use~\eqref{F: Log} to prove that the function $ \te \mapsto (\ga / (\ga + \de \te))^{1 / \te} $ is increasing, whence~\eqref{F: Ineq xf} follows.

We are now ready to state our main results.

\begin{teo}
\label{T: WLLN}
Assume~\eqref{F: Hyp} and let $\xf$ be given by Definition~\ref{D: xf} or~\ref{D: xf01} according as $\te \in (0, 1)$ or not.
Define $\uf = (1 - \de)(1 - \xf)$.
Then,
\begin{equation*}
\lim_{N \to \infty} \, \frac{X^{(N)}(\tau^{(N)})}{N} = \xf \, \text{ and } 
\lim_{N \to \infty} \, \frac{U^{(N)}(\tau^{(N)})}{N} = \uf \quad \text{in probability}. 
\end{equation*}
\end{teo}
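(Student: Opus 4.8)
\emph{Proof proposal.} The plan is to recognise $\{V^{(N)}\}$, after a random time change, as a density dependent family in the sense of Ethier and Kurtz~\cite{MPCC}, and then to extract the terminal behaviour from the associated fluid limit. The starting point is that every transition rate contains the common factor $Y$: each rate equals $Y\,r_\ell(V)$, where
\begin{align*}
r_{(-1,0,1)} &= \la\de X, & r_{(-1,1,0)} &= \la(1-\de)X, \\
r_{(0,0,-2)} &= \tfrac12\la\tu(Y-1), & r_{(0,0,-1)} &= \la\td(Y-1)+\la\ga(N+1-X-Y).
\end{align*}
Hence the embedded jump chain of $V^{(N)}$ is insensitive to this factor, and running the process on the time scale $\mathrm{d}s = Y^{(N)}\,\mathrm{d}t$ yields a coupled continuous-time Markov chain $\hat V^{(N)}$ with rates $r_\ell$ that visits exactly the same sequence of states and is absorbed at the same first hitting time of $\{Y=0\}$ (when $Y=0$ the time change freezes). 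In particular $V^{(N)}(\tau^{(N)}) = \hat V^{(N)}(\hat\tau^{(N)})$ for the corresponding stopping time. Writing $v = (x,u,y) = N^{-1}(X,U,Y)$, the rates $r_\ell$ are of the density dependent form $N\beta_\ell(v)+O(1)$ with $\beta_{(-1,0,1)} = \la\de x$, $\beta_{(-1,1,0)} = \la(1-\de)x$, $\beta_{(0,0,-2)} = \tfrac12\la\tu y$ and $\beta_{(0,0,-1)} = \la\td y + \la\ga(1-x-y)$.

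Next I would apply the law of large numbers for density dependent families. The coefficients $\beta_\ell$ are affine, hence Lipschitz, and $N^{-1}\hat V^{(N)}(0) = (1,0,N^{-1}) \to (1,0,0)$, so the hypotheses are met. The drift $F = \sum_\ell \ell\,\beta_\ell$ produces the system
\begin{align*}
x'(t) &= -\la\,x, \\
u'(t) &= \la(1-\de)\,x, \\
y'(t) &= \la(\de+\ga)\,x - \la\ga - \la\te\,y,
\end{align*}
with $x(0)=1$, $u(0)=0$, $y(0)=0$, whose first two components are $x(t)=e^{-\la t}$ and $u(t)=(1-\de)(1-x(t))$. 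The theorem then gives, for each fixed $T$, $\sup_{t\le T}\lvert N^{-1}\hat V^{(N)}(t)-v(t)\rvert \to 0$ in probability. Observe that although $y(0)=0$, we have $y'(0)=\la\de>0$, so the fluid trajectory leaves the boundary immediately; the event of early extinction of the rumour is thereby pushed into the vanishing exceptional set of this convergence.

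It then remains to pass from convergence on a fixed interval to convergence at the random absorption time. Set $\tf = \inf\{t>0 : y(t)=0\}$. Treating $x$ as the independent variable and solving the resulting linear equation for $y$, the condition $y=0$ at the terminal point reduces, after clearing denominators, exactly to $f(\xf)=0$; hence $x(\tf)=\xf$ and $u(\tf)=(1-\de)(1-\xf)=\uf$. The decisive structural input is that $y$ crosses zero transversally at $\tf$: indeed $y'(\tf) = \la\bigl[(\de+\ga)\xf - \ga\bigr] < 0$, which is precisely inequality~\eqref{F: Ineq xf}. Using this, for small $\eps>0$ the positivity of $y$ on $[\eps,\tf-\eps]$ forces $\hat\tau^{(N)} > \tf-\eps$ with high probability, while $y<0$ just after $\tf$ together with $\hat Y^{(N)}\ge 0$ forces $\hat\tau^{(N)} < \tf+\eps$; thus $\hat\tau^{(N)} \to \tf$ in probability, and uniform convergence together with the continuity of $x$ and $u$ yields $N^{-1}X^{(N)}(\tau^{(N)}) \to \xf$ and $N^{-1}U^{(N)}(\tau^{(N)}) \to \uf$.

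I expect the endpoint analysis to be the main obstacle: the fluid limit only controls the trajectory on a deterministic interval, so genuine care is needed to convert it into control at the random time $\hat\tau^{(N)}$, simultaneously ruling out premature extinction at the left boundary (where $y$ also vanishes) and exploiting the transversality furnished by~\eqref{F: Ineq xf} at the right boundary. The remaining ingredients --- the time-change coupling and the verification of the hypotheses of the density dependent limit theorem --- should be routine.
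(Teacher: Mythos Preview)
Your proposal is correct and follows essentially the same route as the paper: the same random time change removing the common factor $Y$ from the rates, the same density-dependent LLN of Ethier--Kurtz yielding the same ODE system with $x(t)=e^{-\la t}$, $u(t)=(1-\de)(1-x(t))$ and $y(t)=f(x(t))$, and the same transversality argument $y'(\tf)=\la[(\ga+\de)\xf-\ga]<0$ from inequality~\eqref{F: Ineq xf} to obtain $\hat\tau^{(N)}\to\tf$. The only cosmetic difference is in the final step: where you pass directly from bounded-interval convergence plus $\hat\tau^{(N)}\to\tf$ to the conclusion, the paper instead proves a small auxiliary lemma that $\tilde x^{(N)}$ and $\tilde u^{(N)}$ converge uniformly on \emph{all} of $[0,\infty)$ by exploiting their monotonicity in $t$; both arguments are valid, and your flagging of the left-boundary issue ($y(0)=0$, $y'(0)>0$) matches precisely the point the paper singles out.
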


\begin{teo}
\label{T: CLT}
Assume~\eqref{F: Hyp} and define
{\allowdisplaybreaks
\begin{align*}
\ka &= 3 \tu + 2 \td - 4 \ga, \quad
A = \frac{\xf}{\ga - (\ga + \de) \xf}, \quad 
B = \frac{\ga \de \uf}{\ga + \de \te}, \\[0.1cm]
C &= (\ga + \de)^2 \left(4 \de \te^2 - \ka (\ga + 2 \de \te)\right) \xf + \ka \ga (\ga + \de) (\ga + \de (2 \te - 1)) - 4 \de \ga^2 (1 - \te)^2, \\[0.1cm]
D &=
\left\{
\begin{array}{cl}
\dfrac{C (1 - \xf)}{2 (2 \te - 1) (\ga + \de \te)^2}	&\text{ if } \te \neq \dfrac{1}{2}, \\[0.2cm]
\dfrac{2 \ga \left[\ka \de (2 \ga + \de) - 2 \ga (\de - \ka (\ga + \de)) \log \left(\frac{\ga}{\ga + \de}\right)\right]}{(\ga + \de)^2}	&\text{ if } \te = \dfrac{1}{2}.
\end{array}	\right.
\end{align*}}%

Then,
\begin{equation}
\label{F: Biv CLT}
\sqrt{N} \left(\frac{X^{(N)}(\tau^{(N)})}{N} - \xf, \frac{U^{(N)}(\tau^{(N)})}{N} - \uf\right) 
\stackrel{\mathcal{D}}{\rightarrow} N_2(0, \Si) \, \text{ as } \, N \to \infty, 
\end{equation}
where $ \stackrel{\mathcal{D}}{\rightarrow} $ denotes convergence in distribution, and $ N_2(0, \Si) $ is the bivariate normal distribution with mean zero and covariance matrix
\begin{equation*}
\Si =
\begin{pmatrix}
\Si_{11} & \Si_{12} \\
\Si_{21} & \Si_{22}
\end{pmatrix}
\end{equation*}
whose elements are given by
\begin{equation}
\label{F: Cov Matrix}
\begin{aligned}
\Si_{11} &= \xf (1 - \xf) + A^2 D, \\
\Si_{12} &= \Si_{21} = - (1 - \de) \Si_{11} + A B, \\
\Si_{22} &= (1 - \de)^2 \Si_{11} + (1 - \de) (\de (1 - \xf) - 2 A B).
\end{aligned}
\end{equation}
\end{teo}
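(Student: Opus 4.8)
The plan is to recast the problem as a density-dependent Markov population process in the sense of Ethier and Kurtz~\cite{MPCC} and then read off the limiting fluctuations at the absorption time by a delta-method argument. Since every transition rate of $V^{(N)}$ carries a factor $Y^{(N)}$, I would first apply the random time change $ds = Y^{(N)}(t)\,dt$ (equivalently, pass to a coupled process with the same embedded jump chain, as advertised in the Introduction). In the new time scale the rates become linear in the state, i.e.\ of the form $N\beta_l(V/N) + O(1)$, with $\beta_{(-1,0,1)}(v) = \la\de x$, $\beta_{(-1,1,0)}(v) = \la(1-\de)x$, $\beta_{(0,0,-2)}(v) = \la\tu y/2$ and $\beta_{(0,0,-1)}(v) = \la\td y + \la\ga(1-x-y)$, where $v = (x,u,y)$ and I have used $N+1-X-Y \approx N(1-x-y)$. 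I would define the coupled density-dependent family to have exactly these rates and to agree with the time-changed original until absorption, identifying $\tau^{(N)}$ (in the new clock, $\tilde\tau^{(N)}$) with the first hitting time of $\{y=0\}$. Crucially, the time change also removes the degeneracy that, in real time, sends the absorption time to infinity as $Y \to 0$.

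Next I would invoke the functional law of large numbers (Ethier--Kurtz, Ch.~11) to get $N^{-1}\bar V^{(N)} \to v$ uniformly on compacts, where $v$ solves $\dot v = F(v) := \sum_l l\,\beta_l(v)$ with $v(0) = (1,0,0)$; explicitly $\dot x = -\la x$, $\dot u = \la(1-\de)x$ and $\dot y = \la[(\de+\ga)x - \te y - \ga]$. Treating $y$ as a function of the monotone variable $x$ turns the last equation into a linear ODE whose solution meets $y=0$ exactly at $x = \xf$, the root of $f$ from Definition~\ref{D: xf}, with $u = \uf = (1-\de)(1-\xf)$ there. The inequality~\eqref{F: Ineq xf} is precisely what guarantees $\dot y < 0$ at the crossing, i.e.\ that $v$ hits the boundary $\{y=0\}$ transversally at the finite time $s_\infty = -\la^{-1}\log\xf$; this transversality is the hinge of the whole argument. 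I would then apply the functional central limit theorem for density-dependent families to obtain $\sqrt N\,(N^{-1}\bar V^{(N)} - v) \Rightarrow G$ in the Skorokhod space, where $G$ is a zero-mean Gaussian (Ornstein--Uhlenbeck type) process whose covariance $\Xi(s) = \Cov(G(s))$ solves the linear matrix ODE $\dot\Xi = \partial F(v)\,\Xi + \Xi\,\partial F(v)^\top + \sum_l l\,l^\top\beta_l(v)$ with $\Xi(0)=0$.

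To pass from fixed times to the random absorption time I would use the transversal crossing: expanding $0 = N^{-1}\bar Y^{(N)}(\tilde\tau^{(N)}) \approx \dot y(s_\infty)(\tilde\tau^{(N)} - s_\infty) + N^{-1/2}G_y(s_\infty)$ yields $\sqrt N\,(\tilde\tau^{(N)} - s_\infty) \to -G_y(s_\infty)/\dot y(s_\infty)$, and feeding this into the analogous expansions for $\bar X^{(N)}$ and $\bar U^{(N)}$ gives $\sqrt N\,(N^{-1}X^{(N)}(\tau^{(N)}) - \xf) \Rightarrow G_x(s_\infty) - A\,G_y(s_\infty)$ and $\sqrt N\,(N^{-1}U^{(N)}(\tau^{(N)}) - \uf) \Rightarrow G_u(s_\infty) + (1-\de)A\,G_y(s_\infty)$, with $A = \dot x(s_\infty)/\dot y(s_\infty) = \xf/(\ga - (\ga+\de)\xf)$, matching the constant in the statement. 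This step is the standard but delicate part: it requires a continuous-mapping/random-time-change lemma together with $\tilde\tau^{(N)} \to s_\infty$ in probability, and relies on the transversality established above. Since the limits are fixed linear images of the jointly Gaussian vector $(G_x, G_u, G_y)(s_\infty)$, the pair is bivariate normal, which is~\eqref{F: Biv CLT}.

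Finally, computing $\Si$ amounts to evaluating $\Xi(s_\infty)$ and forming these linear combinations. I would write out $\partial F$ and $\sum_l l\,l^\top \beta_l$, reparametrize the covariance ODE by $x$ (using $\dot x = -\la x$) so that it becomes integrable against the explicit solution $v(x)$, and simplify. I expect this explicit evaluation of the covariance---rather than the soft convergence steps---to be the main obstacle: the integrals producing the $y$-variance at $s_\infty$ involve terms of order $x^{2\te - 2}$, which integrate to rational expressions when $\te \neq 1/2$ but to a logarithm when $2\te - 1 = 0$, and this is exactly the source of the two-case formula for $D$ (and hence for $C$ and $B$). The remaining bookkeeping is to verify that $\V(G_x(s_\infty) - A\,G_y(s_\infty))$ collapses to $\xf(1-\xf) + A^2 D$ and that the $U$-combination reduces to the stated $\Si_{12}$ and $\Si_{22}$; the appearance of $\xf(1-\xf)$ and of the factors $(1-\de)$ and $\de(1-\xf)$ reflects the binomial way in which departing ignorants split into spreaders and uninterested individuals (the transition $(-1,1,0)$ being the only one that increments $U$).
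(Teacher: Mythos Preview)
Your proposal is correct and follows essentially the same route as the paper: the random time change to obtain a density-dependent family, the Ethier--Kurtz functional LLN/CLT, the transversal-crossing/delta-method passage to the absorption time (which the paper packages by citing Theorem~11.4.1 of~\cite{MPCC} directly), and the identification of the limit as $(\GV_x(\tf)-A\,\GV_y(\tf),\,\GV_u(\tf)+(1-\de)A\,\GV_y(\tf))$. The only cosmetic difference is that the paper computes $\La=\Cov(\GV(\tf))$ via the integral formula $\int_0^{\tf}\Phi\,G\,\Phi^{T}\,ds$ with the fundamental solution $\Phi$, whereas you propose the equivalent Lyapunov ODE reparametrized by~$x$; in particular the paper also singles out $\te=1/2$ for the same reason you do, the integral $\int_0^t e^{\la(2\te-1)s}\,ds$ appearing in the $(3,3)$ entry.
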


Of course, when $\de = 1$, we can omit the second component of the vector in the left-hand side of~\eqref{F: Biv CLT}, and we denote by $\vn$ ($= \Si_{11}$) the variance of the asymptotic mean zero Gaussian distribution.

\begin{exa}
Let $\rho \in [0, 1]$ and consider our model with the choice $\la = \de = \ga = 1$, $\tu = \rho$ and $\td = 1 - \rho$, so $\te = 0$.
Thus, the limiting proportion of ignorants and the variance of the asymptotic normal distribution in the CLT are given respectively by
\begin{gather*}
\xf = \xf(1, 1, 0) = - \dfrac{\LW{- 2 \, e^{-2}}}{2} \approx 0.203188, \quad \text{and} \\[0.1cm]
\vn = \frac{\xf (1 - \xf) \left(1 - 2 \, \xf + 2 \, \rho \, \xf^2\right)}{(1 - 2 \, \xf)^2} \approx 
0.272736 + 0.0379364 \, \rho.
\end{gather*}
We obtain the [MT] or [DK] model according to whether $\rho$ equals $0$ or $1$, showing that our theorems generalize classical results presented by Sudbury~\cite{Sudbury} and Watson~\cite{Watson}.
\end{exa}

\begin{exa}
\label{E: Hayes}
In an interesting confessional essay, Hayes~\cite{Hayes} describes a mistake committed when he attempted to simulate the basic [DK] model.
The author actually simulated the [MT] model, with the difference that, when two spreaders meet, both become stiflers.
This model is obtained by choosing $\la = \de = \ga = 1$, $\tu = 2$ and $\td = 0$, in which case $\te = 1$, 
\begin{gather*}
\xf = \xf(1, 1, 1) = - \dfrac{1}{2 \, \LWR{- e^{-1 / 2} / 2}} \approx 0.284668, \quad \text{and} \\[0.1cm]
\vn = \frac{\xf (1 - \xf) \left(1 - 3 \, \xf + 3 \, \xf^2\right)}{(1 - 2 \, \xf)^2} \approx 0.427204.
\end{gather*}
This clarifies the numerical value of the proportion of ignorants remaining in the population that Hayes obtained in his simulations.
\end{exa}

\begin{exa}
\label{E: apq DK}
Let $\al, p, q \in (0, 1]$.
We describe a new variant of the [DK] model, which we call the $\apq$-[DK] model.
Suppose that, independently for each pairwise meeting and each individual,
\begin{enumerate}
\item[(a)] A spreader involved in a meeting decides to tell the rumour with probability~$p$.
\item[(b)] Once such a decision is made, any spreader in a meeting with somebody informed of the rumour has probability~$\al$ of becoming a stifler.
\item[(c)] Upon hearing the rumour, an ignorant becomes a spreader or an uninterested individual with respective probabilities~$q$ and $1 - q$.
\end{enumerate}

This model with $q = 1$ (no uninterested individuals) was introduced by Daley and Kendall~\cite{DK}, who studied its deterministic analogue.
This analysis is also presented in Daley and Gani~\cite[Section~5.2]{DG}.
The basic [DK] model corresponds to $\al = p = q = 1$.

Observe that, for the $\apq$-[DK] model, the continuous-time Markov chain $\{V(t)\}_{t \geq 0}$ evolves according to
\begin{equation*}
{\allowdisplaybreaks
\begin{array}{cc}
\text{transition} \quad &\text{rate} \\[0.1cm]
(-1, 0, 1) \quad &p q \, X Y, \\[0.1cm]
(-1, 1, 0) \quad &p (1 - q) \, X Y, \\[0.1cm]
(0, 0, -2) \quad &\al^2 p (2 - p) \displaystyle\binom{Y}{2}, \\[0.2cm]
(0, 0, -1) \quad &\al (1 - \al) p (2 - p) \, Y (Y - 1) + \al p \, Y (N + 1 - X - Y).
\end{array}}%
\end{equation*}
Therefore, Theorems~\ref{T: WLLN} and~\ref{T: CLT} yield novel bivariate limit theorems for this model, which are obtained by making the following substitutions:
\begin{equation*}
\la = p, \, \de = q, \, \tu = \al^2 (2 - p), \, \td = \al (1 - \al) (2 - p), \, \ga = \al \, \text{ and } \, \te = \al (1 - p).
\end{equation*}
Here are some important cases:

\smallskip
\textbf{(1)} For the $\apq$-[DK] model with $p < 1$, the asymptotic proportion of ignorants~$\xf$ is the unique root of
\[ f^{\ast}(x) = \frac{(1 + q (1 - p)) x^{\al (1 - p)} - (\al + q) (1 - p) x - 1 + \al (1 - p)}{(1 - p) (1 - \al (1 - p))} = 0 \]
in the interval $(0, 1)$.
The limiting fraction of uninterested individuals is $\uf = (1 - q)(1 - \xf)$.
When $q = 1$, the value $\xf$ coincides with that obtained for the deterministic analogue of the model in Daley and Gani~\cite[Equation~(5.2.8)]{DG}.

\smallskip
\textbf{(2)} The $(1, 1, q)$-[DK] model is the basic [DK] model with the additional rule that an ignorant is allowed not to have interest in spreading the rumour.
For this model, the limiting proportions of ignorant and uninterested individuals are expressed respectively as
\begin{equation}
\label{F: xu 11q DK}
\begin{aligned}
\xf &= \xf(q, 1, 0) = - \dfrac{\LW{- h \, e^{-h}}}{h} \text{ with } h = 1 + q \quad \text{and} \\[0.1cm] 
\uf &= (1 - q)(1 - \xf).
\end{aligned}
\end{equation}
The entries of the covariance matrix~$\Si$ given in~\eqref{F: Cov Matrix} simplify to
{\allowdisplaybreaks
\begin{align*}
\Si_{11} &= \frac{\xf (1 - \xf) \left( 2 - (3 + q^2) \xf + (1 + q)^2 \xf^2 \right)}{2 (1 - (1 + q) \xf)^2}, \\[0.1cm]
\Si_{12} &= \frac{\xf \, \uf \left( -2 (1 - q) + (1 - q) (3 + q) \xf - (1 + q)^2 \xf^2 \right)}{2 (1 - (1 + q) \xf)^2}, \\[0.1cm]
\Si_{22} &= \frac{\uf \left( 2 q + 2 (1 - 5 q) \xf + (-3 + 9 q + 3 q^2 - q^3) \xf^2 + (1 - q) (1 + q)^2 \xf^3 \right)}{2 (1 - (1 + q) \xf)^2}.
\end{align*}}%
When $q = 1$, these formulae reduce to the well-known results for the basic [DK] model.

\smallskip
\textbf{(3)} For the $(\al, 1, 1)$-[DK] model, we have that
\begin{gather*}
\xf = \xf(1, \al, 0) = - \dfrac{\LW{- h \, e^{-h}}}{h} \text{ with } h = 1 + \frac{1}{\al}, \quad \text{and} \\[0.1cm]
\vn = \frac{\xf (1 - \xf) \left(2 \al^2 + \left[2 (1 - \al) - \al (1 + \al)^2\right] \xf + \al (1 + \al)^2 \xf^2 \right)}{2 (\al - (1 + \al) \xf)^2}.
\end{gather*}
These formulae agree with those presented in Exercise~5.7 of Daley and Gani~\cite{DG} (in which the reader is asked to obtain $\vn$ by making use of Kendall's Principle of Diffusion of Arbitrary Constants).
\end{exa}

\begin{exa}
We define the $\apq$ version of the [MT] model in a similar way, with the rules (a) and (c) given in Example~\ref{E: apq DK} and
\begin{enumerate}
\item[(b$^\prime$)] Once a spreader decides to tell the rumour in a directed contact with somebody already informed, only this spreader chooses with probability~$\al$ to become a stifler
\end{enumerate}
instead of (b) in Example~\ref{E: apq DK}, holding independently for each directed contact between two individuals.
The basic [MT] model has $\al = p = q = 1$.

Since the $\apq$-[MT] model is obtained by the choice
\begin{equation*}
\la = p, \, \de = q, \, \tu = 0, \, \td = \ga = \al \, \text{ and } \, \te = 0,
\end{equation*}
we conclude that the limiting proportion of ignorants is given by
\[ \xf = \xf(q, \al, 0) = - \dfrac{\LW{- h \, e^{-h}}}{h} \text{ with } h = 1 + \frac{q}{\al}. \]
Two particular cases are:

\smallskip
\textbf{(1)} For the $(1, 1, q)$-[MT] model, the values of $\xf$ and $\uf$ are the same as those of the $(1, 1, q)$-[DK] model, given by~\eqref{F: xu 11q DK}.
However,
{\allowdisplaybreaks
\begin{align*}
\Si_{11} &= \frac{\xf (1 - \xf) \left( 1 - (1 + q^2) \xf \right)}{(1 - (1 + q) \xf)^2}, \\[0.1cm]
\Si_{12} &= - \frac{\xf \, \uf^2}{(1 - (1 + q) \xf)^2}, \\[0.1cm]
\Si_{22} &= \frac{\uf \left( q + (1 - 5 q) \xf + (-1 + 4 q + q^2) \xf^2 \right)}{(1 - (1 + q) \xf)^2}.
\end{align*}}%

\smallskip
\textbf{(2)} The $(\al, 1, 1)$-[MT] model corresponds to the basic [MT] model in which the number of contacts with an informed individual by each spreader waiting to become a stifler has geometric distribution with parameter $\al$.
In this case, $\xf$ coincides with that of the $(\al, 1, 1)$-[DK] model, but
\[ \vn = \frac{\xf (1 - \xf) \left(\al^2 - \left(\al^2 + 2 \al - 1\right) \xf \right)}{(\al - (1 + \al) \xf)^2}. \]
This generalized form of the [MT] model in which a spreader becomes a stifler only after being involved in a random number of unsuccessful communications is further studied in Lebensztayn et al.~\cite{RPRS}.
\end{exa}

\begin{exa}
Other rumour models in the literature for which our results apply were proposed by Pearce~\cite{Pearce} and Kawachi~\cite{Kawachi}. 
The first one has the dynamics of the [DK] model, with the following interaction rules. 
A meeting between an ignorant and a spreader results in the ignorant turning into a spreader with probability~$p$. 
When two spreaders interact, either both become stiflers with probability $q_2$ or only one of them does so with probability $q_1 \leq 1 - q_2$. 
Finally, the interaction of a spreader with a stifler results in two stiflers with probability $r$. 
Thus, this model is obtained by considering $\la = p$, $\de = 1$, $\tu = {q_2}/{p}$, $\td = {q_1}/{(2 p)}$ and $\ga = {r}/{p}$. 

One of the deterministic models studied in Kawachi~\cite{Kawachi} evolves similarly to Hayes' model, as explained in Example~\ref{E: Hayes}. 
The contacts are as in the [MT] model, but, when two spreaders meet, both become stiflers with probability~$\tilde{\beta}$; the transformation of only one of them into a stifler is not possible. 
When a spreader encounters an ignorant, the first one transmits the rumour with probability~$\tilde{\alpha}$ and the latter joins the spreaders with probability~$\tilde{\theta}$. 
Finally, in a meeting between a spreader and a stifler, the first individual becomes a stifler with probability~$\tilde{\gamma}$. 
This model is obtained by the choice $\la = \tilde{\alpha}$, $\de = \tilde{\theta}$, 
$\tu = 2 \, \tilde{\beta} / \tilde{\alpha}$, $\td = 0$ and $\ga = \tilde{\gamma} / \tilde{\alpha}$.
\end{exa}

\section{Proofs}
\label{S: Proofs}

The main method for proving Theorems~\ref{T: WLLN} and~\ref{T: CLT} is, by means of a random time change, to define a new process $ \{ \tilde V^{(N)}(t) \}_{t \geq 0} $ with the same transitions as $ \{ V^{(N)}(t) \}_{t \geq 0} $, so that they terminate at the same point.
This transformation is done in such a way that $ \{ \tilde V^{(N)}(t) \}_{t \geq 0} $ is a density dependent Markov chain to which we can apply Theorem~11.4.1 of Ethier and Kurtz~\cite{MPCC}.
The arguments are similar to those used in Kurtz et al.~\cite{EMCG} for the coverage of a random walks system on the complete graph.

\subsection{Random time change}
\label{SS: Random time change}

We define
\begin{align*}
\Theta^{(N)}(t) &= \int_0^t Y^{(N)}(s) \, ds, \, 0 \leq t \leq \tau^{(N)}, \\
\Upsilon^{(N)}(s) &= \inf \{t: \Theta^{(N)}(t) > s \}, \, 0 \leq s \leq \int_0^{\infty} Y^{(N)}(u) \, du,
\end{align*}
and let $ \tilde V^{(N)}(t) = V^{(N)}(\Upsilon^{(N)}(t)) $.
The time-changed process $ \{ \tilde V^{(N)}(t) \}_{t \geq 0} $ has the same transitions as $ \{ V^{(N)}(t) \}_{t \geq 0} $, hence if we define
\[ \tilde{\tau}^{(N)} = \inf \{t: \tilde{Y}^{(N)}(t) = 0 \}, \]
we have that $V^{(N)}(\tau^{(N)}) = \tilde{V}^{(N)}(\tilde{\tau}^{(N)})$. 
Furthermore, $ \{ \tilde V^{(N)}(t) \}_{t \geq 0} $ is a continuous-time Markov chain with initial state $(N, 0, 1)$ and transition rates given by
{\allowdisplaybreaks
\begin{equation}
\label{F: Rates TCP}
\begin{array}{cc}
\text{transition} \quad &\text{rate} \\[0.1cm]
\ell_0 = (-1, 0, 1) \quad &\la \, \de \, \tilde X, \\[0.1cm]
\ell_1 = (-1, 1, 0) \quad &\la (1 - \de) \, \tilde X, \\[0.1cm]
\ell_2 = (0, 0, -2) \quad &\la \, \tu \displaystyle\frac{\tilde Y - 1}{2}, \\[0.2cm]
\ell_3 = (0, 0, -1) \quad &\la \, \td \, (\tilde Y - 1) + \la \, \ga \, (N + 1 - \tilde X - \tilde Y).
\end{array}
\end{equation}}%

\subsection{Deterministic limit of the time-changed process}
\label{SS: Deterministic limit}

We define for $t \geq 0$,
\[ \tilde v^{(N)}(t) = \dfrac{\tilde V^{(N)}(t)}{N} = (\tilde x^{(N)}(t), \tilde u^{(N)}(t), \tilde y^{(N)}(t)), \]
and consider
\begin{align*}
\beta_{\ell_0} (x, u, y) &= \la \, \de \, x, & \beta_{\ell_1} (x, u, y) &= \la (1 - \de) \, x, \\
\beta_{\ell_2} (x, u, y) &= \la \, \tu \, \frac{y}{2}, & \beta_{\ell_3} (x, u, y) &= \la \, \td \, y + \la \, \ga \, (1 - x - y).
\end{align*}
Note that the rates in~\eqref{F: Rates TCP} can be written as
\[ N \left[ \beta_{\ell_i} \left( \dfrac{\tilde X}{N}, \dfrac{\tilde U}{N}, \dfrac{\tilde Y}{N} \right) + O \left( \dfrac{1}{N} \right) \right], \]
so $ \{ \tilde v^{(N)}(t) \}_{t \geq 0} $ is a density dependent Markov chain with possible transitions in the set $\{ \ell_0, \ell_1, \ell_2, \ell_3 \}$.

Now we use Theorem~11.2.1 of Ethier and Kurtz~\cite{MPCC} to conclude that the time-changed system converges almost surely as $ N \to \infty $ (on a suitable probability space).
The drift function is given by
\[ F(x, u, y) = \sum_i \ell_i \, \beta_{\ell_i} (x, u, y) 
= (- \la x, \la (1 - \de) x, \la (\ga + \de) x - \la \te y - \la \ga), \]
hence the limiting deterministic system is governed by the following system of ordinary differential equations
\begin{equation*}
\begin{cases}
x^{\prime}(t) = - \la \, x(t), \\[0.1cm]
u^{\prime}(t) = \la (1 - \de) \, x(t), \\[0.1cm]
y^{\prime}(t) = \la (\ga + \de) \, x(t) - \la \te \, y(t) - \la \ga, \\[0.1cm]
x(0) = 1, u(0) = 0 \text{ and } y(0) = 0.
\end{cases}
\end{equation*}
The solution of this system is given by $ v(t) = (x(t), u(t), y(t)) $, where
\begin{equation*}
x(t) = e^{-\la t}, \, u(t) = (1 - \de)(1 - x(t)), \, y(t) = f(x(t)),
\end{equation*}
with $f$ replaced by $f_{\te}$ if $\te$ equals $0$ or $1$.
According to Theorem~11.2.1 of Ethier and Kurtz~\cite{MPCC},

\begin{lem}
\label{L: Conv v}
We have that $ \tilde v^{(N)}(t) $ converges almost surely to $ v(t) $, uniformly on bounded time intervals.
\end{lem}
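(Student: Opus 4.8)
The plan is to verify the hypotheses of Theorem~11.2.1 of Ethier and Kurtz~\cite{MPCC} for the density dependent family $\{\tilde v^{(N)}(t)\}_{t \geq 0}$ already identified, and then to confirm that the function $v(t)$ exhibited above is indeed the solution of the associated limiting system. First I would observe that each rate function $\beta_{\ell_i}$ is affine in $(x, u, y)$, so that the drift $F(x, u, y) = \sum_i \ell_i \, \beta_{\ell_i}(x, u, y)$ is a globally Lipschitz vector field on $\bbR^3$; since there are only the four transitions $\ell_0, \ell_1, \ell_2, \ell_3$ and each $\beta_{\ell_i}$ is bounded on the compact region where the rescaled densities evolve, the summability requirement $\sum_i |\ell_i| \sup \beta_{\ell_i} < \infty$ holds trivially. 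The initial data converge as well, because $\tilde v^{(N)}(0) = (1, 0, 1/N) \to (1, 0, 0) = v(0)$.

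A point deserving attention is that the rates in~\eqref{F: Rates TCP}, although proportional to $N$, are not \emph{exactly} of density dependent form: each differs from $N \, \beta_{\ell_i}(\tilde X / N, \tilde U / N, \tilde Y / N)$ by a term of order $O(1)$, uniformly in the state, which is exactly the $O(1/N)$ discrepancy recorded when the rates were put in density dependent form. I would argue that this perturbation is harmless for the law of large numbers: in the rescaled generator it contributes a correction that is bounded by $C/N$ and vanishes uniformly on the compact domain, so that setting $\beta_{\ell_i}^{(N)} = \beta_{\ell_i} + O(1/N)$ places us in the framework of Theorem~11.2.1, whose proof via the Poisson (random time change) representation couples all processes on a common probability space and yields almost sure convergence, uniform on bounded time intervals.

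It then remains to confirm that $v(t) = (e^{-\la t}, (1 - \de)(1 - e^{-\la t}), f(e^{-\la t}))$ solves the limiting system. The equations for $x$ and $u$ are immediate: $x'(t) = -\la \, x(t)$ and $u'(t) = -(1 - \de) \, x'(t) = \la (1 - \de) \, x(t)$, with the correct initial values. For the third coordinate, writing $y(t) = f(x(t))$ and using $x'(t) = -\la \, x(t)$, I would verify the algebraic identity
\[ x \, f'(x) - \te \, f(x) = \ga - (\ga + \de) \, x, \]
which gives $y'(t) = f'(x(t)) \, x'(t) = \la (\ga + \de) \, x(t) - \la \te \, y(t) - \la \ga$, as required, while the initial condition $y(0) = f(1) = 0$ holds because $f(1) = 0$. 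The boundary cases $\te \in \{0, 1\}$ are treated in the same manner with $f$ replaced by $f_0$ or $f_1$, the corresponding identity being obtained by letting $\te \to 0^{+}$ or $\te \to 1^{-}$ in the one displayed above. Uniqueness of the trajectory follows from the Lipschitz property of $F$, so $v$ is the unique solution and Theorem~11.2.1 delivers the claimed almost sure convergence of $\tilde v^{(N)}$ to $v$, uniform on bounded time intervals.

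The only genuinely delicate step is the one raised in the second paragraph, namely justifying that the $O(1/N)$ gap between the true rates and their density dependent counterparts does not disturb the deterministic limit; everything else—the global Lipschitz bound, the convergence of initial data, and the verification that $v$ solves the ODE—is routine once the identity $x \, f'(x) - \te \, f(x) = \ga - (\ga + \de) \, x$ is established.
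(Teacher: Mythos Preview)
Your proposal is correct and follows exactly the paper's approach: the paper simply invokes Theorem~11.2.1 of Ethier and Kurtz~\cite{MPCC} after noting that the rates~\eqref{F: Rates TCP} have the density dependent form $N[\beta_{\ell_i}(\tilde v^{(N)}) + O(1/N)]$, and you have supplied the routine verifications (Lipschitz drift, convergence of initial data, the identity $x f'(x) - \te f(x) = \ga - (\ga+\de)x$ showing that $v$ solves the limiting ODE) that the paper leaves implicit.
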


We prove that for each of the first two components of $ \tilde v^{(N)} $, the convergence is uniform on the whole line.

\begin{lem}
\label{L: Conv xu}
We have that $ \tilde x^{(N)}(t) $ converges almost surely to $ x(t) $, uniformly on~$ \bbR $.
The analogous assertion holds for $ \tilde u^{(N)} $ and $u$.
\end{lem}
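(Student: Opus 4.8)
The plan is to upgrade the locally uniform convergence supplied by Lemma~\ref{L: Conv v} to global uniform convergence, exploiting monotonicity in $t$ together with the fact that the limiting trajectories $x$ and $u$ possess finite, matching limits as $t \to \infty$. The first thing I would record is that, by the transition rates in~\eqref{F: Rates TCP}, the coordinate $\tilde X^{(N)}$ decreases by one at each $\ell_0$ or $\ell_1$ jump and is left unchanged by $\ell_2, \ell_3$, so $\tilde x^{(N)}$ is non-increasing in $t$; likewise $\tilde U^{(N)}$ increases by one only at $\ell_1$ jumps, so $\tilde u^{(N)}$ is non-decreasing. The limits $x(t) = e^{-\la t}$ and $u(t) = (1-\de)(1 - e^{-\la t})$ are continuous and monotone, with $x(t)\downarrow 0$ and $u(t)\uparrow 1-\de$. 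A crucial observation is that the time-changed chain does \emph{not} freeze at $\tilde\tau^{(N)}$: since the $\ell_0,\ell_1$ rates are proportional to $\tilde X$ rather than to $\tilde Y$, the process $\{\tilde X^{(N)}(t)\}$ is a linear death chain that reaches $0$ in finite time almost surely. Hence $\tilde x^{(N)}(t)\downarrow 0$, and since each of the $N$ initially ignorant individuals is eventually consumed and independently becomes uninterested with probability $1-\de$, the terminal value satisfies $\tilde u^{(N)}(\infty)\sim\mathrm{Binomial}(N,1-\de)/N\to 1-\de = u(\infty)$ almost surely.

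For the first component I would argue by splitting the half-line. Fix $\eps > 0$ and choose $T$ with $x(T) = e^{-\la T} < \eps$. On $[0,T]$, Lemma~\ref{L: Conv v} gives $\sup_{[0,T]}|\tilde x^{(N)} - x| \to 0$ almost surely. On the tail $[T,\infty)$, monotonicity yields $0 \le \tilde x^{(N)}(t) \le \tilde x^{(N)}(T)$ and $0 \le x(t) \le x(T) < \eps$; because $\tilde x^{(N)}(T)\to x(T)$ by Lemma~\ref{L: Conv v}, for large $N$ both $\tilde x^{(N)}(t)$ and $x(t)$ lie in $[0,2\eps)$, so their difference is at most $2\eps$ uniformly in $t\ge T$. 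Combining the two regimes gives $\sup_{t\ge 0}|\tilde x^{(N)}(t)-x(t)|\to 0$ almost surely, and the extension to $\bbR$ is immediate since both functions are constant for $t\le 0$. This is a Pólya-type argument: locally uniform convergence of monotone functions to a continuous monotone limit with matching endpoints forces global uniformity.

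For the second component the cleanest route I see is a coupling identity. Because the $\ell_0$ and $\ell_1$ rates are both proportional to $\tilde X$ with constant ratio $\de:(1-\de)$, the type of each successive depletion of an ignorant is an independent $\mathrm{Bernoulli}(1-\de)$ mark, independent of the jump times. Writing $K^{(N)}(t) = N - \tilde X^{(N)}(t)$ for the number of ignorants consumed by time $t$ and $S_k = B_1 + \cdots + B_k$ for the partial sums of these marks, I obtain the exact representation $\tilde U^{(N)}(t) = S_{K^{(N)}(t)}$, whence
\[
\sup_{t}\Bigl|\tilde u^{(N)}(t) - (1-\de)\bigl(1-\tilde x^{(N)}(t)\bigr)\Bigr|
= \frac1N \max_{0\le k\le N}\bigl|S_k - (1-\de)k\bigr|.
\]
By the strong law of large numbers $S_k/k \to 1-\de$, and a routine maximal estimate gives $N^{-1}\max_{k\le N}|S_k-(1-\de)k|\to 0$ almost surely. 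Since $(1-\de)(1-\tilde x^{(N)})\to (1-\de)(1-x) = u$ uniformly on $\bbR$ by the first part, the triangle inequality yields $\sup_{\bbR}|\tilde u^{(N)} - u|\to 0$ almost surely. Alternatively, one may repeat the endpoint splitting of the previous paragraph for $\tilde u^{(N)}$, now using the terminal limit $\tilde u^{(N)}(\infty)\to 1-\de$ established above.

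The point requiring genuine care is the behavior at infinity: everything hinges on recognizing that the time-changed chain is continued past $\tilde\tau^{(N)}$ and therefore actually drives $\tilde x^{(N)}$ to $0$ and $\tilde u^{(N)}$ to $1-\de$, matching $x(\infty)$ and $u(\infty)$. Without this identification the tail estimates would fail, since a process frozen at absorption would converge to $\xf$ and $\uf$ rather than to the endpoints of the deterministic curves. Once the correct asymptotic endpoints are in hand, both the Pólya-type splitting for $\tilde x^{(N)}$ and the coupling estimate for $\tilde u^{(N)}$ are elementary.
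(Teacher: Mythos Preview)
Your argument for $\tilde x^{(N)}$ is exactly the paper's: pick $t_0$ with $x(t_0)<\eps/2$, invoke Lemma~\ref{L: Conv v} on $[0,t_0]$, and on $[t_0,\infty)$ use monotonicity to trap $\tilde x^{(N)}(t)\le \tilde x^{(N)}(t_0)\le x(t_0)+\eps/2\le\eps$. The paper then writes only that ``the analogous assertion holds for $\tilde u^{(N)}$ and $u$'' and gives no further details.

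Your treatment of $\tilde u^{(N)}$ goes beyond the paper's sketch, and the extra care is warranted. A literal rerun of the $x$-argument yields only the lower tail bound $\tilde u^{(N)}(t)\ge \tilde u^{(N)}(t_0)\ge u(t_0)-\eps/2$; there is no a~priori reason why $\tilde u^{(N)}(t)$ should stay below $1-\de+\eps$ without first identifying the terminal value. You close this by the thinning representation $\tilde U^{(N)}(t)=S_{K^{(N)}(t)}$ with i.i.d.\ Bernoulli$(1-\de)$ marks, which simultaneously gives $\tilde u^{(N)}(\infty)\to 1-\de$ and the uniform estimate $\sup_t|\tilde u^{(N)}(t)-(1-\de)(1-\tilde x^{(N)}(t))|=N^{-1}\max_{k\le N}|S_k-(1-\de)k|\to 0$. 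This is a clean, correct addition; your observation that the time-changed chain is \emph{not} frozen at $\tilde\tau^{(N)}$ (because the $\ell_0,\ell_1$ rates depend on $\tilde X$ and not on $\tilde Y$) is precisely the point one must check to make either the endpoint-splitting or the coupling argument go through.
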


\begin{proof}
We prove the first statement. 
Given any $ \eps > 0 $, we take $t_0 = t_0(\eps)$ such that
$ x(t) \leq \eps/2 $ for all $ t \geq t_0 $.
By the uniform convergence of $ \tilde x^{(N)}(t) $ to $ x(t) $ on the interval $ [0, t_0] $, there exists $N_0 = N_0(\eps)$ such that 
\[ |\tilde x^{(N)}(t) - x(t)| \leq \eps/2 \text{ for all } N \geq N_0 
\text{ and } t \in [0, t_0]. \]
Therefore, for all $ N \geq N_0 $ and $ t \geq t_0 $,
\[ \tilde x^{(N)}(t) \leq \tilde x^{(N)}(t_0) \leq x(t_0) + \eps/2 \leq \eps, \]
whence $ |\tilde x^{(N)}(t) - x(t)| \leq \eps $. \qquad
\end{proof}

\subsection{Proofs of Theorems \ref{T: WLLN} and \ref{T: CLT}}
\label{SS: Proofs of Theorems}

Both theorems follow from Theorem~11.4.1 of Ethier and Kurtz~\cite{MPCC}.
We adopt the notations used there, except for the Gaussian process $V$ defined on p.~458, that we would rather denote by $\GV = (\GV_x, \GV_u, \GV_y)$.
Here $\varphi(x, u, y) = y$, and
\[ \tf = \inf \{t: y(t) \leq 0 \} = - \frac{1}{\la} \, \log \xf. \]
Moreover, from~\eqref{F: Ineq xf},
\begin{equation}
\label{F: Der Neg}
\nabla \varphi(v(\tf)) \cdot F(v(\tf)) = y^{\prime} (\tf) = \la (\ga + \de) \, \xf - \la \ga < 0.
\end{equation}

Let us explain the argument leading to the proof of the Law of Large Numbers.
Although $y(0) = 0$, we have that $y^{\prime}(0) > 0$.
This and~\eqref{F: Der Neg} imply that $y(\tf - \eps) > 0$ and $y(\tf + \eps) < 0$ for $0 < \eps < \tf$.
The almost sure convergence of~$\tilde{y}^{(N)}$ to~$y$ uniformly on bounded intervals yields that
\begin{equation}
\label{F: Conv tf}
\lim_{N \to \infty} \, \tilde \tau^{(N)} = \tf \quad \text{almost surely}.
\end{equation}
Thus, recalling that $V^{(N)}(\tau^{(N)}) = \tilde{V}^{(N)}(\tilde{\tau}^{(N)})$, we obtain Theorem~\ref{T: WLLN} from formula~\eqref{F: Conv tf} and Lemma~\ref{L: Conv xu}.

With respect to the Central Limit Theorem, we get from Theorem~11.4.1 of Ethier and Kurtz~\cite{MPCC} that 
$\sqrt{N} \, (\tilde x^{(N)}(\tilde \tau^{(N)}) - \xf, \tilde u^{(N)}(\tilde \tau^{(N)}) - \uf)$
converges in distribution as $N \to \infty$ to
\begin{equation}
\label{F: LD}
\left( \GV_x(\tf) - A \, \GV_y(\tf), \, \GV_u(\tf) + A \, (1 - \de) \, \GV_y(\tf) \right),
\end{equation}
where $A$ is the constant defined in Theorem~\ref{T: CLT}.
The asymptotic distribution is a mean zero bivariate normal distribution, so it remains to explain how formula~\eqref{F: Cov Matrix} for the covariance matrix~$\Si$ is obtained.

For this, we summarize the steps taken to compute $ \La = \Cov(\GV(\tf), \GV(\tf)) $, a task that can be better carried out with mathematical software.
First, we calculate the matrix of partial derivatives of the drift function $ F $ and the matrix $ G $, as defined in Ethier and Kurtz~\cite[p.~458]{MPCC}.
They are given by
\begin{equation*}
\begin{gathered}
\partial F(x, u, y) = 
\begin{pmatrix} 
-\la & 0 & 0 \\ 
\la (1 - \de) & 0 & 0 \\ 
\la (\ga + \de) & 0 & -\la \te
\end{pmatrix}
\, \text{ and } \\[0.1cm]
G(x, u, y) = 
\begin{pmatrix}
\la x & -\la (1 - \de) x & -\la \de x \\
-\la (1 - \de) x & \la (1 - \de) x & 0 \\
-\la \de x & 0 & \la (\de - \ga) x + \la (\ka - \te + 2 \ga) y + \la \ga
\end{pmatrix}.
\end{gathered}
\end{equation*}
Next, we obtain the solution $\Phi$ of the matrix equation
\[ \frac{\partial}{\partial t} \, \Phi(t, s) = \partial F(x(t), u(t), y(t)) \, \Phi(t, s),
\quad \Phi(s, s) = I_3, \]
where $I_3$ is the $3 \times 3$ identity matrix.
Then, we compute 
\begin{equation}
\label{F: Covariance}
\Cov(\GV(t), \GV(t)) = \int_0^{t} \Phi(t, s) \, G(x(s), u(s), y(s)) \, {[\Phi(t, s)]}^T \, ds.
\end{equation}

We emphasize that the computation of~$\La$ must be separated into four cases: $\te \in (0, 1) \setminus \{ 1/2 \}$, $\te = 1/2$, $\te = 0$ and $\te = 1$.
The value $\te = 1/2$ must be considered separately from the interval $(0, 1)$ owing to the appearance of the integral $\int_0^{t} e^{\la (2 \te - 1) s} \, ds$ in the element $(3, 3)$ of the matrix given in formula~\eqref{F: Covariance}.
This also explains why the constant~$D$ in Theorem~\ref{T: CLT} is defined differently for $\te = 1/2$.

The final step to obtain $\La$ is to set $t = \tf$ in the formula obtained from~\eqref{F: Covariance}.
This is accomplished by making suitable substitutions in this formula according to the value of~$\te$; for instance, for $\te \in (0, 1) \setminus \{ 1/2 \}$ we replace $e^{-\la t}$ and $e^{-\la \te t}$ respectively by
\[ \xf \, \text{ and } \, \frac{(\ga + \de) \, \te \, \xf + \ga (1 - \te)}{\ga + \de \te}. \]
The resulting formula (valid for any $\te$) is
\begin{equation}
\label{F: Final Cov}
\La = 
\begin{pmatrix}
\xf \, (1 - \xf) & -(1 - \de) \, (1 - \xf) \, \xf & 0 \\
-(1 - \de) \, (1 - \xf) \, \xf & (1 - \de) \, (1 - \xf) \, ((1 - \de) \, \xf + \de) & -B \\
0 & -B & D
\end{pmatrix}.
\end{equation}
Using~\eqref{F: LD}, \eqref{F: Final Cov} and the well-known properties of the variance and covariance, we get formula~\eqref{F: Cov Matrix}.

\section*{Acknowledgments}

We thank the referees for the careful reading of the paper and helpful suggestions.
We are grateful to Tom Kurtz and Alexandre Leichsenring for earlier fruitful discussions.

\end{document}